\documentclass[a4paper, 10pt]{amsart}
\textwidth16cm \textheight21cm \oddsidemargin-0.1cm
\evensidemargin-0.1cm
\usepackage{amsmath, amscd}
\usepackage{amssymb, color, hyperref}

\def\doi#1{{\small\href{https://doi.org/#1}{\path{doi:#1}}}}
\def\arxiv#1{{\small\href{http://www.arxiv.org/abs/#1}{\path{arXiv:#1}}}}
\def\url#1{{\small\href{#1}{\path{#1}}}}

\theoremstyle{plain}
\newtheorem{theorem}{\bf Theorem}[section]
\newtheorem{proposition}[theorem]{\bf Proposition}
\newtheorem{lemma}[theorem]{\bf Lemma}

\theoremstyle{definition}

\newtheorem{remark}[theorem]{\bf Remark}

\newcommand{\N}{\mathbb N}
\newcommand{\Z}{\mathbb Z}
\newcommand{\R}{\mathbb R}

\newcommand{\C}{\mathbb C}

 \DeclareMathOperator{\ord}{ord}
 
 \DeclareMathOperator{\Ca}{\mathsf {Ca}}
\DeclareMathOperator{\spec}{spec}

\newcommand{\red}{{\text{\rm red}}}
\renewcommand{\t}{\, | \,}

\numberwithin{equation}{section}

\begin{document}

\title[On polynomial formulas for arithmetic invariants]{There is no polynomial formula for the catenary and the tame degree of finitely generated monoids}

\author{Alfred Geroldinger and Alessio Moscariello}

\address{University of Graz, NAWI Graz \\
Department of Mathematics and Scientific Computing \\
Heinrichstra{\ss}e 36\\
8010 Graz}

\email{alfred.geroldinger@uni-graz.at, alessio.moscariello@uni-graz.at}
\urladdr{https://imsc.uni-graz.at/geroldinger}


\keywords{finitely generated monoids, affine monoids, numerical monoids, catenary degree, tame degree}

\subjclass[2020]{20M13, 20M14, 13A05, 13F15}

\thanks{This work was supported  by the Austrian Science Fund FWF, Project PAT 9756623}

\begin{abstract}
In the last two decades there has been a wealth of results determining the precise value of the catenary degree and the tame degree. Mostly, however, only for  very special classes of monoids and domains. In the present work we now show that there is no polynomial formula, neither for the catenary nor for the tame degree, which is valid for a sufficiently large class of finitely generated monoids.
\end{abstract}

\maketitle

\section{Introduction} \label{1}

A (commutative and cancellative) monoid resp. a (commutative integral) domain is factorial if and only if it is a Krull monoid resp. a Krull domain with trivial class group. Factorization theory measures the deviation from the uniqueness of factorizations by arithmetic invariants, such as length sets, elasticities, catenary and tame degrees, and others. The overall goal is to relate arithmetic invariants and algebraic invariants of the objects under consideration to each other. 

\smallskip
Finitely generated monoids play a crucial role in Factorization Theory. They include (generalized) numerical monoids and affine monoids. Apart from being of interest in their own right, they also occur as target monoids of  larger classes of monoids and domains whose arithmetic invariants coincide with those of an associated finitely generated monoid (more on that in Subsection \ref{2.3}). For most invariants studied so far and which have a real number as their value, it is quite easy to show that they are finite for finitely generated monoids. However,  precise values (in terms of algebraic ingredients, such as the set of atoms) are known in very special cases only. 

\smallskip
The goal of the present note is to shed some light on this phenomenon and to show, as the title indicates, that there are no polynomial formulas for some of the key arithmetic invariants, which are valid for a sufficiently large class of finitely generated monoids. The model for our study is a paper by F.~Curtis \cite{Cu90a}, who showed that there is no polynomial formula for the Frobenius number of numerical monoids. We start by making precise what we mean by the existence of a polynomial formula. There are two  concepts of polynomial formulas and we will make use of both of them.

\smallskip
Let $H$ be a finitely generated monoid and (as outlined in Section \ref{2}) we may assume without restriction that $H$ is reduced. Then $H$ is contained in a finitely generated quotient group $\mathsf q (H)$, and we consider a monoid monomorphism $\varphi_H' \colon H \hookrightarrow G \times \Z^s$ where $s \in \N$ and $G$ is a finite abelian group with basis $({\mathsf e}_1,\ldots,{\mathsf e}_t)$ (note that we may choose $G$ to be trivial in the special case of affine monoids).  For an element $h \in H$, let $\varphi_H' (h) = (g, m_1, \ldots, m_s)$ and suppose that $g = a_1 \mathsf e_1 + \ldots, + a_t \mathsf e_t$ with $a_i \in [0, \ord (\mathsf e_i)-1]$ for $i \in [1,t]$.
Now let 
$\varphi_H: H \rightarrow \mathbb{Z}^t \times \mathbb{Z}^s$ be  defined by $\varphi_H(h)=(a_1,\ldots,a_t,m_1,\ldots,m_s)$.

\smallskip
Let $\mathcal H$ be a family of finitely generated reduced monoids and fix some positive integer $r \in \N$.  For each monoid $H$ in this family, let $\mathcal A (H)$ denote the set of atoms (which is unique) and suppose that  $|\mathcal A (H)| \le r$. Furthermore, let $\varphi_H: H \rightarrow \mathbb{Z}^t \times \mathbb{Z}^s$ be as above and let $\mathsf a(H) \in \mathbb{R}$ be an arithmetic invariant. We say that {\it there is a polynomial formula for $\mathsf a(H)$} (in terms of the atoms and valid for all monoids from $\mathcal H$)  if there exist polynomials $f_1,\ldots,f_\sigma \in \mathbb{C}[X_1,\ldots,X_{r(s+t)}]$ such that for every $H \in \mathcal{H}$  there is an $i \in [1,\sigma]$ such that $f_i(\varphi_H(h_1),\ldots,\varphi_H(h_r))=\mathsf a(H)$, where  $\mathcal A (H) = \{h_1,\ldots,h_{r'}\}$ and $\{h_{r'+1}, \ldots, h_r\} = \{1_H\}$ for $r' = |\mathcal A (H)|$.

\smallskip
We say that {\it there is an implicit polynomial formula}  for  $\mathsf a (H)$ 
(in terms of the atoms and valid for all monoids from $\mathcal H$) 
if there exists a nonzero polynomial $F \in \mathbb{C}[X_1,\ldots,X_{r(s+t)},Y]$ such that for every $H \in \mathcal{H}$ (and with $\{h_1,\ldots,h_r\}$ as above) we have $F (\varphi_H(h_1),\ldots,\varphi_H(h_r),\mathsf a(H))=0$. If the polynomials $f_1,\ldots,f_\sigma$ describe a polynomial formula for $\mathsf a(H)$, then 
\[
\displaystyle F = \prod_{i=1}^{\sigma} (f_i-Y)
\] 
is an implicit formula for $\mathsf a(H)$, and if there is no implicit formula for $\mathsf a(H)$, then there is no polynomial formula for $\mathsf a(H)$. However, the converse is not true (see Remark \ref{elasticity}). 
We formulate the main result of the present paper.

\smallskip
\begin{theorem} \label{main}
Let $r, s \in \N$, let $G$ be a finite abelian group,  and let $\mathcal{H}$ be a class of finitely generated reduced monoids which are embeddable into $(G \times \mathbb{Z}^s, +)$ and whose set of atoms has at most $r$ elements. If $\mathcal{H}$ contains all numerical monoids  generated by three atoms,
then there is no polynomial formula for the catenary degree  nor for the tame degree valid for all monoids in $\mathcal{H}$.
\end{theorem}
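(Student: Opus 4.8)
The plan is to restrict attention to the numerical monoids generated by three atoms, all of which belong to $\mathcal H$ by hypothesis. For such a monoid $H$ one has $\mathsf q(H) = \Z$, and (since $G \times \Z^s$ is necessarily infinite, $s \ge 1$) we may choose $\varphi_H'$ with trivial $G$-component landing in a single $\Z$-coordinate; then $\varphi_H$ sends an atom $n$ to the vector with $n$ in one coordinate and $0$ elsewhere, and sends the padding generators $h_{r'+1} = \cdots = h_r = 1_H$ to the zero vector. Hence a polynomial formula for $\mathsf c$ (resp.\ $\mathsf t$) valid on $\mathcal H$ restricts to finitely many polynomials $g_1, \ldots, g_\sigma \in \C[X_1, X_2, X_3]$ such that for every numerical monoid $S$ minimally generated by three atoms $\{n_1, n_2, n_3\}$ there is an $i$ with $g_i(n_1, n_2, n_3) = \mathsf c(S)$ (resp.\ $= \mathsf t(S)$). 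It therefore suffices to rule out such $g_1, \ldots, g_\sigma$.

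I would test this against the two-parameter family
\[
S_{k,\ell} \;=\; \langle\, k\ell,\ k\ell+1,\ k\ell+k \,\rangle \qquad (k, \ell \in \N,\ k, \ell \ge 2).
\]
A direct check shows that $S_{k,\ell}$ is a numerical monoid minimally generated by these three atoms; in fact it is a gluing of $\langle \ell, \ell+1\rangle$ and $\N$, hence a complete intersection, with Betti elements $k\ell(\ell+1)$ and $k(k\ell+1)$ and least multipliers $c_1 = \ell+1$, $c_2 = k$, $c_3 = \ell$ (here $c_i$ denotes the least $c > 0$ with $c\, n_i \in \langle n_j : j \ne i\rangle$). By the formula for the catenary degree of numerical monoids generated by three atoms (Section~\ref{2}), $\mathsf c(S_{k,\ell}) = \max\{c_1, c_2, c_3\} = \max\{k, \ell+1\}$; an entirely analogous computation with the tame degree, using its explicit description from Section~\ref{2}, shows that $\mathsf t(S_{k,\ell})$ agrees on a suitable subregion with a polynomial in $(k, \ell)$ which is nonconstant in $\ell$ (note $\mathsf t(S_{k,\ell}) \ge \mathsf c(S_{k,\ell})$).

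Consider the region $R = \{(k, \ell) \in \Z^2 : 2 \le k \le \ell\}$, on which $\mathsf c(S_{k,\ell}) = \ell+1$; it is Zariski-dense in $\C^2$ because any nonzero polynomial vanishing on $R$ would be divisible by $k - k_0$ for every integer $k_0 \ge 2$. If polynomials $g_1, \ldots, g_\sigma$ as above existed, then $R = R_1 \cup \cdots \cup R_\sigma$ with $R_i = \{(k,\ell) \in R : g_i(k\ell,\ k\ell+1,\ k\ell+k) = \ell+1\}$. Since $\C^2$ is irreducible, its Zariski closure $\overline{R_{i_0}}$ equals $\C^2$ for some $i_0$, so the polynomial $g_{i_0}(k\ell,\ k\ell+1,\ k\ell+k) - (\ell+1) \in \C[k, \ell]$ vanishes on a Zariski-dense set and is therefore identically zero. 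Substituting $k = 0$ gives $g_{i_0}(0, 1, 0) = \ell+1$, which is absurd. The same reasoning --- with $\ell+1$ replaced by the relevant nonconstant polynomial on its dense subregion --- rules out a polynomial formula for $\mathsf t$.

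The weight of the proof lies in two places. First, one needs the explicit descriptions of $\mathsf c$ and $\mathsf t$ for numerical monoids generated by three atoms in terms of the presentation data; these belong to the preliminary material of Section~\ref{2}. Second --- and this is the genuine design constraint --- the family must be arranged so that its generators do \emph{not} recover the ``growing'' parameter $\ell$ by any polynomial identity over $\C$: this is exactly why natural candidates such as $\langle a, a+m, a+2m\rangle$ or $\langle n, n+1, n+2\rangle$ are useless, since for them the invariants are piecewise polynomial in the atoms and so \emph{do} admit a (multi-branch) polynomial formula. The family $S_{k,\ell}$ avoids this because $\ell = n_1/(n_3 - n_1)$ involves division by the \emph{nonconstant} quantity $n_3 - n_1 = k$, and the degeneration $k \mapsto 0$ collapses all three atoms to constants while $\mathsf c = \ell+1$ stays nonconstant. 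Finally, this strategy refutes only the explicit polynomial formula, not the implicit one --- consistently with Remark~\ref{elasticity} --- because the graph $\{(n_1, n_2, n_3, \mathsf c(S_{k,\ell}))\}$ lies on a surface in $\C^4$ and hence automatically satisfies a nonzero $F$; refuting the implicit formula would instead require a family Zariski-dense in $\C^3$.
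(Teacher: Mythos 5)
Your proposal takes a genuinely different route from the paper. The paper proves the stronger statement that there is no \emph{implicit} polynomial formula, via Proposition~\ref{criterion}: it uses Dirichlet's theorem to build, for each odd prime $h_1$, families with pairwise coprime atoms realizing $\sim h_1/2$ distinct linear forms $\lambda_k(h_2,h_3)$ as catenary degrees, and a projective limiting argument forces any implicit formula $G$ to satisfy $\deg G \ge \beta h_1$ for all $h_1$, a contradiction. You instead use a concrete two-parameter family and a Zariski-density argument that refutes only the explicit (multi-branch) formula. That is enough for Theorem~\ref{main} as stated, and your observation that the family $S_{k,\ell}$ must be designed so that the parameter $\ell$ is \emph{not} polynomially recoverable from the atoms is exactly the right design principle (it is the reason the paper also works with ratios $h_3/h_2$ approximating irrationals). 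Your final remark that the family lies on a surface in $\C^4$ and hence cannot refute the implicit formula is correct and well taken.

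However, there is a genuine gap. You assert $\mathsf c(S_{k,\ell}) = \max\{c_1,c_2,c_3\} = \max\{k,\ell+1\}$ citing ``the formula for the catenary degree of numerical monoids generated by three atoms (Section~\ref{2})''. Section~\ref{2} contains no such formula; the relevant result in the paper is Proposition~\ref{cat3}, which applies only to three-generated numerical monoids with \emph{pairwise coprime} atoms (equivalently, the non-symmetric, uniquely presented case). Your family has $\gcd(k\ell,\, k\ell+k) = k \ge 2$, so the atoms are not pairwise coprime and $S_{k,\ell}$ is in fact a complete intersection with only two Betti elements, $k\ell(\ell+1) = c_1h_1 = c_3h_3$ and $k(k\ell+1)=c_2h_2$ --- precisely the case Proposition~\ref{cat3} excludes. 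The identity $\mathsf c(S_{k,\ell}) = \max\{k,\ell+1\}$ does hold (it follows, for instance, from Philipp's relation-theoretic description of the catenary degree applied to the gluing presentation), but you would need to prove or properly cite it. The tame degree part is even less supported: there is no ``explicit description from Section~\ref{2}'', and $\mathsf c = \mathsf t$ is not automatic outside the pairwise-coprime setting of Proposition~\ref{cat3}, so the claim that $\mathsf t(S_{k,\ell})$ equals a polynomial nonconstant in $\ell$ on a dense subregion requires an actual computation. Filling these two computational gaps would make the argument complete for the theorem as stated (though still weaker than what the paper establishes, since it leaves the non-existence of implicit formulas open).
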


\smallskip
To analyze the assumptions of Theorem \ref{main}, let $H$ be a finitely generated reduced monoid. If $H$ has precisely one atom, then $H$ is isomorphic to $(\N_0, +)$, whence $H$ is factorial and $\mathsf c (H) = \mathsf t (H)=0$. If $H$ has precisely two atoms $u$ and $v$, then either $H$ is isomorphic to $(\N_0^2,+)$, whence $H$ is again factorial and $\mathsf c (H) = \mathsf t (H)=0$, or there exists a relation between the two atoms, say $u^{d_1}=v^{d_2}$ with integers $1 < d_1 \le  d_2$. In this case, we have $\mathsf c (H) = d_2$. Furthermore, if $\mathcal H$ would be a finite set of monoids, say $\mathcal H = \{H_1, \ldots, H_{\alpha}\}$, then there is a polynomial formula for the catenary degree (just set $f_i = X_1 - \mathsf c (H_i)$ for $i \in [1, \alpha]$).

In Section \ref{2}, we fix the required background necessary for the proof of Theorem \ref{main}. We recall the concepts of the catenary and the tame degree (Subsection \ref{2.1}), discuss finitely generated monoids (Subsection \ref{2.2}), and then we illustrate that Theorem \ref{main} applies to a larger class of monoids and domains which are not necessarily finitely generated (Subsection \ref{2.3}). The proof of Theorem \ref{main} will be given in Section \ref{3}. It is based on a result from projective geometry (Proposition \ref{criterion}) and on a result on generic presentations of semigroups (Proposition \ref{cat3}). In two final remarks we discuss phenomena which occur in settings close to the one in Theorem \ref{main}. Let $\mathcal H$ be a class of monoids as in Theorem \ref{main}. Although, there is no formula for the catenary and tame degree, there is an implicit formula for a different arithmetic invariant namely the elasticity (Remark \ref{elasticity}). Furthermore, although there is no formula for the catenary degree in terms of the atoms there are formulas in terms of other algebraic invariants (Remark \ref{tiles}).

\smallskip
\section{Preliminaries} \label{2}
\smallskip

We denote by $\N$ resp. by $\N_0$ the set of positive resp. of non-negative integers. For integers $a, b \in \Z$, we denote by $[a, b] = \{x \in \Z \mid a \le x \le b\}$ the discrete interval betwen $a$ and $b$. 
By a {\it monoid}, we mean a commutative cancellative semigroup with identity. Let $H$ be a monoid. Then $\mathsf q (H)$ denotes its quotient group, $\mathcal A (H)$ its set of atoms, $H^{\times}$  its group of units, and $H_{\red} = \{ a H^{\times} \mid a \in H \}$ its associated reduced monoid. For integers $h_1, \ldots, h_k \in \N_0$, we denote by $\langle h_1, \ldots, h_k \rangle \subseteq (\N_0, +)$ the submonoid generated by $h_1, \ldots, h_k$.
For a set $P$, let $\mathcal F (P)$ denote the (multiplicatively written) free abelian monoid with basis $P$.

\noindent
\subsection{Arithmetic Invariants.} \label{2.1}
Let $H$ be a monoid. Then $\mathsf Z (H) = \mathcal F ( \mathcal A (H_{\red}) )$ denotes the factorization monoid of $H$ and the homomorphism $\pi \colon \mathsf Z (H) \to H_{\red}$, defined by $\pi (u) = u$ for all $u \in \mathcal A (H_{\red})$, is the factorization homomorphism. For an element $a \in H$,  $\mathsf Z (a) = \pi^{-1} (aH^{\times})$ is the set of factorizations of $a$. Then $H$ is called 
\begin{itemize}
\item {\it atomic} if $\mathsf Z (a) \ne \emptyset$ for all $a \in H$ (equivalently, every $a \in H \setminus H^{\times}$ can be written as a finite nonempty product of atoms),
    
\item {\it factorial} if $|\mathsf Z (a)|=1$ for all $a \in H$ (equivalently, $H_{\red}$ is a free abelian monoid).
\end{itemize}    
To define the distance of factorizations, let $z, z' \in \mathsf Z (H)$ be given. Then we may write
\[
z = u_1 \cdot \ldots \cdot u_{\ell}v_1 \cdot \ldots \cdot v_m \quad \text{and} \quad z' = u_1 \cdot \ldots \cdot u_{\ell}w_1 \cdot \ldots \cdot w_n \,,
\]
where $\ell, m, n \in \N_0$ and all $u_i, v_j, w_k$ are atoms of $H_{\red}$ such that $\{v_1, \ldots, v_m\} \cap \{w_1, \ldots, w_n\} = \emptyset$. Then $\mathsf d (z, z') = \max \{m,n\} \in \N_0$ is called the {\it distance} between $z$ and $z'$.

Before defining the catenary degree, let us first consider an element $b \in H$ having two distinct factorizations $y$ and $y'$, say $y = v_1 \cdot \ldots \cdot v_m$ and $y' = w_1 \cdot \ldots \cdot w_n $, with $\mathsf d (z, z') = \max \{m ,n \}$. Then, for each $N \in \N$, the element $b^N$ has factorizations $y^N$ and ${y'}^N$ with $\mathsf d ( y^N, {y'}^N) = N \max \{m,n\}$. However, we can go from $y^N$ to ${y'}^N$ in small steps via the factorizations $y_i= y^i {y'}^{N-i}$ for $i \in [0, N]$. This phenomenon is captured by the concept of the catenary degree, whose definition we are recalling now. To do so, let $a \in H$ and $m \in \N_0$. A finite sequence $z_0, \ldots, z_k \in \mathsf Z (a)$ is called an $m$-chain of factorizations of a if $\mathsf d (z_{i-1}, z_i) \le m$ for all $i \in [1,k]$. We denote by $\mathsf c (a)$ the smallest $N \in \N_0 \cup \{\infty\}$ such that any two factorizations $z, z' \in \mathsf Z (a)$ can be concatenated by an $N$-chain. Finally,
\[
\Ca (H) =  \{\mathsf c (a) \mid a \in H, \mathsf c (a) > 0 \} \in \N_0 \cup \{\infty\} \quad \text{resp.} \quad \mathsf c (H) = \sup \Ca (H)
\]
denotes the set of positive catenary degrees resp. the  {\it catenary degree} of $H$. 

We refer to \cite[Chapter 1]{Ge-HK06a} and to \cite[Section 3]{Ga-Ge-Sc15a} for background information on the local and global tame degrees. To recall the definition, let $u \in H$ be an atom. Then the {\it local tame degree} $\mathsf t (H,u)$ is the smallest $N \in \N_0 \cup \{\infty\}$ with the following property:
\begin{enumerate}
\item[]
If \ $m \in \N$ and $v_1, \dots , v_m \in \mathcal A(H)$ are
such that \ $u \t v_1 \cdot \ldots \cdot v_m$, but $u$ divides no
proper subproduct of $v_1\cdot \ldots \cdot v_m$, then there exist
$\ell \in \N$ and  $u_2, \ldots , u_{\ell} \in \mathcal A
(H)$ such that $v_1 \cdot \ldots \cdot v_m = u u_2 \cdot \ldots
\cdot u_{\ell}$ and \ $\max \{\ell ,\, m \} \le N$.
\end{enumerate}
and
\[
\mathsf t (H) = \sup \{\mathsf t (H, u) \mid u \in \mathcal A (H) \} \in \N_0 \cup \{\infty\}
\]
denotes the {\it (global) tame degree} of $H$. We have $0 \le \mathsf c (H) \le \mathsf t (H)$, and  $H$ is factorial if and only if $\mathsf c (H)=0$ if and only if $\mathsf t (H)=0$. Catenary and tame degrees are among the best investigated invariants in factorization theory and there is an abundance of results. For a sample in the setting of numerical monoids see \cite{B-C-K-R06, C-G-L09, Om12a, Om-Ra09a, Ki-ON-Po16, C-C-M-M-P14, C-C-M-M-P17, ON-Pe18a, Ga-Ga-Ma23a, Su23a}; for computational methods see \cite{GG-MF-VT15}, and for results in other classes of monoids see Remark \ref{tiles}.

\smallskip
\noindent
\subsection{Finitely generated monoids and affine monoids.} \label{2.2}
As can be seen by the above definitions, invertible elements play no role in the definition of arithmetic invariants. In particular, we have $\mathsf c (H) = \mathsf c (H_{\red})$ and $\mathsf t (H) = \mathsf t ((H_{\red})$. Since we are interested in arithmetic invariants of monoids, we suppose from now on that monoids are reduced.

Let $H$ be a reduced monoid. For a subset $U \subseteq H$, the following conditions are equivalent (see \cite[Proposition 1.1.7]{Ge-HK06a})
\begin{enumerate}
\item[(a)] $H$ is atomic and $U = \mathcal A (H)$.

\item[(b)] $U$ is the smallest generating set of $H$.

\item[(c)] $U$ is a minimal generating set (with respect to set-theoretical inclusion).
\end{enumerate}
In particular, $H$ is finitely generated if and only if $H$ is atomic and $\mathcal A (H)$ is finite, and the set of atoms is the unique minimal generating set. 
An {\it affine monoid}  is a commutative, cancellative, finitely generated, and torsionfree semigroup with identity element. Thus, the quotient group of an affine monoid $H$ is a finitely generated free abelian group, whence there is a monoid monomorphism $\varphi \colon H \to (\Z^s, +)$ for some $s \in \N$. Affine monoids play a crucial role in combinatorial commutative algebra. They include numerical monoids and generalized numerical monoids, as introduced in \cite{Ci-Fa-Na25a}. 

\smallskip
\noindent
\subsection{More general monoids and domains} \label{2.3} There are monoids and domains which are not finitely generated but whose arithmetic invariants coincide with the ones of an associated finitely generated monoid. We sketch a couple of such situations. 

To begin with, let $H$ be an atomic monoid. If $P$ is any set of pairwise non-associated primes of $H$ and $T$ is the set of all elements of $H$ that are not divisible by any prime from $P$, then $T$ is a submonoid of $H$ and $H \cong \mathcal F (P) \times T$. This immediately implies that $\mathsf c (H) = \mathsf c (T)$ and $\mathsf t (H) = \mathsf t (T)$. Thus, Theorem \ref{main} applies to all atomic monoids which have only finitely many atoms that are not prime. Generalized Cohen-Kaplansky domains (introduced in \cite{An-An-Za92b}) are atomic domains with this property. Let $K$ be an algebraic number field with ring of integers $\mathcal O_K$ and let $\mathcal O \subseteq \mathcal O_K$ be an order. If for every nonzero prime ideal  $\mathfrak p \in \spec (\mathcal O)$ there is only one prime ideal $\mathfrak P \in \spec (\mathcal O_K)$ with $\mathfrak P \cap \mathcal O = \mathfrak p$, then the monoid $\mathcal I^* ( \mathcal O)$ of invertible prime ideals of $\mathcal O$ has only finitely many irreducible ideals that are not prime. This generalizes from orders in number fields to weakly Krull Mori domains (\cite[Theorem 3.7.10]{Ge-HK06a}).

Secondly, consider a Krull monoid $H$ with class group $G$ and let $G_P \subseteq G$ denote the set of classes containing prime divisors. If $G_P$ is finite, then the monoid of zero-sum sequences $\mathcal B (G_P)$ is a reduced finitely generated monoid. There is a transfer homomorphism $\boldsymbol \beta \colon H \to \mathcal B (G_P)$ and $\mathsf c (H) = \mathsf c ( \mathcal B (G_P))$ (apart from the exceptional case when  $\mathsf c ( \mathcal B (G_P)) =0$; see \cite[Theorem 3.4.10]{Ge-HK06a}).

\smallskip
\section{Proof of Theorem \ref{main}} \label{3}
\smallskip

This section is devoted to proving Theorem \ref{main}. We proceed in a series of lemmas and propositions. The first proposition (which is a modification of a part of \cite[Theorem 3]{Cu90a}) is a key ingredient. It provides sufficient conditions guaranteeing that there is no implicit formula for an invariant $\mathsf a(h_1,h_2,h_3) \in \mathbb{R}$, which depends on three integers $h_1,h_2,h_3$ and which is connected to some linear forms. 

We use some basic concepts from algebraic geometry. 

For $k \in \N$ and a homogeneous polynomial $F \in \C [X_0, \ldots, X_k]$, we denote by $\mathcal V (F) \subseteq \mathbb P_{\C}^k$ the vanishing locus of $F$, which is a projective variety. A consequence of the Nullstellensatz is that divisibility of polynomials and containment of vanishing loci are closely related: if $f,g \in \C[X_0,\ldots,X_k]$ are polynomials, with $g$ irreducible, we have that
\begin{equation}\label{div1}
	g | f \Longleftrightarrow \mathcal{V}(g) \subseteq \mathcal{V}(f).
\end{equation}
More in general, since $\C[X_0,\ldots,X_n]$ is factorial, if $g_1,\ldots,g_\sigma$ are irreducible distinct polynomials, then 
\begin{equation}\label{div+}
	g_1\ldots g_\sigma | f \Longleftrightarrow \mathcal{V}(g_i) \subseteq \mathcal{V}(f) \ \ \forall i=1,\ldots,\sigma .
\end{equation}   
             
These definition are given in the projective setting, but the equivalent definition hold in the affine space $\mathbb{A}^n(\mathbb{C})$ (dropping the assumption that $f$ is homogeneous). In particular, (\ref{div1}) and (\ref{div+}) hold in the affine setting as well.

We are ready to state the key ingredient of our argument.
\smallskip
\begin{proposition}\label{criterion}
Let $\mathcal P \subseteq \N_0$ be an infinite set, let $\beta \in \mathbb{R}_{>0}$ and let $\mathcal{H}$ be the class of all  submonoids of $\N_0$ generated by three elements. Let $\mathsf a(H) = \mathsf a (h_1,h_2,h_3) \in \R$ be an invariant, defined for every monoid $H = \langle h_1, h_2, h_3 \rangle \in \mathcal{H}$. 
Assume that for every $h_1 \in \mathcal P$ the following condition holds.
\begin{itemize}
\item For every $k \in \N_0$ with $k \le \beta h_1$, there exists an infinite set  $A_k \subseteq \mathbb{R}_{>0} \setminus \mathbb{Q}_{>0}$ of positive irrational numbers and a polynomial $\lambda_k \in \mathbb{C}[X_2,X_3]$ such that, for every $\alpha \in A_k$ and every $\epsilon > 0$, there are $h_2,h_3 \in \N_0$ such that
    \[
    \langle h_1, h_2, h_3 \rangle \in \mathcal H, \quad  \left| \alpha-\frac{h_3}{h_2} \right| < \epsilon,  \quad \text{ and} \quad  \mathsf a(h_1,h_2,h_3)=\lambda_k(h_2,h_3) \,.
    \] 
    Moreover, the polynomials $\lambda_k$ and $\lambda_{k'}$ are distinct if $k \neq k'$.
\end{itemize} 
Then there is no implicit polynomial formula $G(X_1,X_2,X_3,Y) \in \mathbb{C}[X_1,X_2,X_3,Y]$ for $\mathsf a(h_1,h_2,h_3)$.
\end{proposition}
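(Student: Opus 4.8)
The plan is to argue by contradiction. Suppose there were a nonzero polynomial $G \in \mathbb{C}[X_1,X_2,X_3,Y]$ with $G\bigl(h_1,h_2,h_3,\mathsf a(h_1,h_2,h_3)\bigr)=0$ for every $\langle h_1,h_2,h_3\rangle \in \mathcal H$. I would then derive $G=0$ in three successive rigidity steps: first freezing $h_1$ together with a ``slope class'' $k$, then letting $k$ range, then letting $h_1$ range. In each step the engine is the elementary fact that a polynomial with more zeros than its degree permits (over a field, resp.\ over an integral domain) must vanish identically; only the first step involves any geometry.

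\emph{Step 1 ($h_1$ and $k$ fixed).} Fix $h_1 \in \mathcal P$, put $\tilde G(X_2,X_3,Y) := G(h_1,X_2,X_3,Y)$, and for $k \in \mathbb{N}_0$ with $k \le \beta h_1$ set $P_k(X_2,X_3) := \tilde G\bigl(X_2,X_3,\lambda_k(X_2,X_3)\bigr) \in \mathbb{C}[X_2,X_3]$; the goal is to show $P_k \equiv 0$. Fix $\alpha \in A_k$. Applying the hypothesis with $\epsilon = 1/n$ for each $n \in \mathbb{N}$ produces $h_2^{(n)},h_3^{(n)} \in \mathbb{N}_0$ with $\langle h_1,h_2^{(n)},h_3^{(n)}\rangle \in \mathcal H$, $\bigl|\alpha - h_3^{(n)}/h_2^{(n)}\bigr| < 1/n$, and, since $\mathsf a(h_1,h_2^{(n)},h_3^{(n)}) = \lambda_k(h_2^{(n)},h_3^{(n)})$,
\[
P_k\bigl(h_2^{(n)},h_3^{(n)}\bigr) = G\bigl(h_1,h_2^{(n)},h_3^{(n)},\mathsf a(h_1,h_2^{(n)},h_3^{(n)})\bigr) = 0 .
\]
If the $h_2^{(n)}$ were bounded, so would be the $h_3^{(n)}$ (their ratios being close to $\alpha$), leaving only finitely many values of $h_3^{(n)}/h_2^{(n)}$, none equal to the irrational $\alpha$, contradicting convergence to $\alpha$; hence, after passing to a subsequence, $h_2^{(n)} \to \infty$ and $h_3^{(n)}/h_2^{(n)} \to \alpha$. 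Now assume $P_k \ne 0$, let $Q$ be its leading form and $D := \deg P_k$. Since $h_3^{(n)}/h_2^{(n)}$ stays bounded, the lower-degree part of $P_k$ evaluates to $O\bigl((h_2^{(n)})^{D-1}\bigr)$, so
\[
0 = P_k\bigl(h_2^{(n)},h_3^{(n)}\bigr) = \bigl(h_2^{(n)}\bigr)^{D}\Bigl(Q\bigl(1,\tfrac{h_3^{(n)}}{h_2^{(n)}}\bigr) + O\bigl(1/h_2^{(n)}\bigr)\Bigr),
\]
and dividing by $(h_2^{(n)})^{D}$ and letting $n \to \infty$ yields $Q(1,\alpha) = 0$. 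As $\alpha$ ranges over the infinite set $A_k$, the one-variable polynomial $T \mapsto Q(1,T)$ acquires infinitely many roots, hence is zero, hence $Q = 0$ — contradicting $P_k \ne 0$. Thus $P_k \equiv 0$, i.e.\ $\tilde G\bigl(X_2,X_3,\lambda_k(X_2,X_3)\bigr) \equiv 0$ for every $k \le \beta h_1$. (Projectively: the points $[1:h_2^{(n)}:h_3^{(n)}]$ converge to the point at infinity $[0:1:\alpha]$, which therefore lies on the hypersurface cut out by the homogenization of $P_k$; as infinitely many points of the line $\{X_0=0\}$ lie on that hypersurface, \eqref{div1} forces $X_0$ to divide its equation, which is impossible unless $P_k \equiv 0$.)

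\emph{Steps 2 and 3 ($k$, then $h_1$, vary).} View $\tilde G = \sum_{i=0}^{e} g_i(X_2,X_3)\,Y^i$ as an element of $K[Y]$, $K := \mathbb{C}(X_2,X_3)$, where $e = \deg_Y \tilde G \le \deg_Y G$. By hypothesis $\lambda_0,\dots,\lambda_{\lfloor \beta h_1\rfloor}$ are pairwise distinct in $K$, and by Step 1 each is a root of $\tilde G$ in $K[Y]$. Since $\mathcal P$ is infinite and $\beta > 0$, there are infinitely many $h_1 \in \mathcal P$ with $\lfloor \beta h_1\rfloor + 1 > \deg_Y G \ge e$; for each such $h_1$ the polynomial $\tilde G \in K[Y]$ has more roots than its degree, so $\tilde G = 0$, i.e.\ $G(h_1,X_2,X_3,Y) \equiv 0$ in $\mathbb{C}[X_2,X_3,Y]$. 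Finally, writing $G = \sum_j c_j(X_2,X_3,Y)\,X_1^j$ with $c_j \in \mathbb{C}[X_2,X_3,Y]$, the polynomial $\sum_j c_j(X_2,X_3,Y)\,T^j$ over the integral domain $\mathbb{C}[X_2,X_3,Y]$ vanishes at infinitely many values $T = h_1$ and is therefore zero; hence every $c_j = 0$ and $G = 0$, a contradiction.

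\emph{Where the difficulty lies.} The content is concentrated entirely in Step 1, and more precisely in the passage from ``$P_k$ vanishes at a sequence of lattice points escaping to infinity with $h_3/h_2 \to \alpha$'' to ``the leading form of $P_k$ vanishes at the slope $\alpha$''. This works only because of two features built into the hypothesis: the slopes $\alpha$ are \emph{irrational}, which forces $h_2^{(n)} \to \infty$ so that the leading form dominates the displayed estimate, and they fill out an \emph{infinite} family $A_k$, so that a single-variable polynomial required to vanish on all of them must be zero. Steps 2 and 3 are purely formal; the growth condition $k \le \beta h_1$ enters only to guarantee that, for $h_1$ large, the number $\lfloor \beta h_1\rfloor + 1$ of available pairwise-distinct interpolating polynomials $\lambda_k$ eventually surpasses the fixed number $\deg_Y G$.
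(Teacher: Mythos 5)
Your proposal is correct. The heart of the matter (your Step~1) coincides with the paper's argument, merely phrased affinely instead of projectively: your ``leading form $Q$ evaluated at the slope $\alpha$'' is exactly the paper's homogenization $F^*$ evaluated at the point at infinity $[1:\alpha:0]$, and in both cases the irrationality of $\alpha$ is what forces $h_2^{(n)} \to \infty$ so that the limit argument applies, while the infinitude of $A_k$ kills the leading form. Your Steps 2--3, however, take a genuinely different route to the contradiction. The paper stays inside $\mathbb{C}[X_2,X_3,Y]$: each $\lambda_k(X_2,X_3)-Y$ is an irreducible factor of $G(h_1,X_2,X_3,Y)$, the $\lfloor \beta h_1 \rfloor +1$ of them are distinct, so their product divides $G(h_1,\cdot)$ and hence $\deg G \ge \beta h_1$ --- contradicting $\deg G < \infty$ since $h_1$ ranges over the infinite set $\mathcal P$. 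You instead pass to the fraction field $K=\mathbb{C}(X_2,X_3)$, bound $\deg_Y \tilde G$ by the number of roots $\lambda_k$, conclude $G(h_1,\cdot)=0$ for all large $h_1 \in \mathcal P$, and finish by a vanishing-for-infinitely-many-$X_1$-values argument. Both are sound; yours is slightly more elementary (no appeal to the Nullstellensatz in the final step, only root counting over a field), and it has the small advantage of handling transparently the corner case where $G(h_1,X_2,X_3,Y)$ is identically zero for some $h_1$ --- a case the paper's degree inequality $\deg G \ge \deg F \ge \beta h_1$ implicitly assumes does not occur for the chosen $h_1$, although its argument would survive because that case cannot occur for all $h_1$ unless $G=0$.
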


\begin{proof}
Assume to the contrary that there exists an implicit polynomial formula $G(X_1,X_2,X_3,Y) \in \C[X_1, X_2, X_3, Y]$ for $\mathsf a (H)$. We fix some $h_1 \in \mathcal P$ and show that  $\deg G \ge \beta h_1$, a contradiction to the assumption that  $\mathcal P \subseteq \N_0$ is infinite.
	
We choose some $k \in \N_0$ with $k \le \beta h_1$, and set $F(X_2,X_3)=G(h_1,X_2,X_3,\lambda_k(X_2,X_3))$. 

Let $\alpha \in A_k$, $m \in \N$ and $\epsilon = \frac{1}{m}$. Then there exist $h_{2,m},h_{3,m} \in \N_0$ such that 
\[
\langle h_1, h_{2.m}, h_{3,m} \rangle \in \mathcal H , \quad 
\left|\alpha-\frac{h_{3,m}}{h_{2,m}} \right| < \frac{1}{m} \quad \text{ and } \quad \mathsf a(h_1,h_{2,m},h_{3,m})=\lambda_k(h_{2,m},h_{3,m}) \,.
\] 
Then, by construction, $F(h_{2,m},h_{3,m}) = 0$. Let $F^*(X_2,X_3,Z) \in \mathbb{C}[X_2,X_3,Z]$ be the homogenization of $F$ with respect to $Z$ and consider its  vanishing locus $\mathcal V(F^*) \subseteq \mathbb{P}^2(\mathbb{C})$. Then $F^*(1,\frac{h_{3,m}}{h_{2,m}},\frac{1}{h_{2,m}})=F^*(h_{2,m},h_{3,m},1)=0$. Since $\alpha \not \in \mathbb{Q}$, we have an infinite non-constant sequence of couples $(h_{2,m},h_{3,m})$, and thus considering its limit, we have by continuity $F^*(1,\alpha,0)=0$. 

Since $A_k$ is  infinite, it follows that $\mathcal V(F^*) \subseteq \mathbb{P}^2(\mathbb{C})$ must contain $\mathcal V(Z)$, whence $Z \t F^*(X_2,X_3,Z)$. Since $F^*$ is the homogenization of $F$, this implies that 
\[
F(X_2,X_3)=0 \quad \text{ and hence} \quad  G(h_1,X_2,X_3,\lambda_k(X_2,X_3))=0 \,.
\]
Now we consider $F(X_2,X_3,Y)=G(h_1,X_2,X_3,Y)$. Then $F$ vanishes on $\mathcal V(\lambda_k(X_2,X_3)-Y) \subseteq \mathbb{A}^3(\mathbb{C})$ for every $k \le \beta h_1$. But since the polynomials $\lambda_k(X_2,X_3)-Y \in \mathbb{C}[X_2,X_3,Y]$ are irreducible and pairwise linearly independent for all $k \le \beta h_1$, by (\ref{div+}) we deduce that the product of these polynomials divides $F$, and thus for the total degree we have $\deg G \ge \deg F \ge \beta h_1$ for every $h_1 \in \mathcal{P}$, a contradiction. 

\end{proof}

We continue with a special case of a result from \cite{BL-GA-GE11A}, which studies the interplay of generic presentations of a semigroup and its arithmetic invariants.

\smallskip
\begin{proposition} \label{cat3}
Let $H$ be a numerical monoid with $\mathcal{A}(H)=\{h_1,h_2,h_3\}$ where $h_1,h_2,h_3$ are pairwise coprime. For $\{i,j, \ell\}= [1,3]$, we define
\[
c_i=\min \{k \in \mathbb{N} \ | \ kh_i \in \langle h_j,h_{\ell} \rangle \} \quad \text{ and} \quad  c_ih_i = r_{i,j}h_j+r_{i,\ell}h_{\ell} \,, 
\]
where  $r_{i,j} \in \N_0$. Then   $c_i=r_{j,i}+r_{\ell,i}$  for all $i,j \in [1,3]$ and
$$\mathsf c(H)=\mathsf t(H)=\max\{c_1,c_2,c_3,r_{12}+r_{13},r_{21}+r_{23},r_{31}+r_{32}\}.$$
\end{proposition}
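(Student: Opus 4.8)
The plan is to derive everything from the classical structure theory of three-generated numerical monoids together with the description of the catenary and tame degrees in terms of Betti elements.

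First I would use that pairwise coprimality of $h_1,h_2,h_3$ rules out the complete intersection case: a three-generated complete intersection numerical monoid is a nontrivial gluing, and such a gluing exhibits two of its three atoms as common multiples of an integer $d\ge 2$. Hence, by the structure theorem of Herzog, the defining ideal of the semigroup algebra of $H$ is the ideal of $2\times 2$ minors of a $2\times 3$ monomial matrix of the form $\left(\begin{smallmatrix} X_1^{\alpha_1}&X_2^{\alpha_2}&X_3^{\alpha_3}\\ X_2^{\beta_2}&X_3^{\beta_3}&X_1^{\beta_1}\end{smallmatrix}\right)$ with all $\alpha_i,\beta_i\in\N$. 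Expanding the three minors produces a minimal presentation of $H$ consisting of exactly the relations $R_i\colon c_ih_i=r_{i,j}h_j+r_{i,\ell}h_\ell$ (for $\{i,j,\ell\}=[1,3]$), where $c_i=\alpha_i+\beta_i$ and, after matching indices, $\{r_{j,i},r_{\ell,i}\}=\{\alpha_i,\beta_i\}$; in particular the $r_{i,j}$ are uniquely determined, $0<r_{i,j}<c_j$ for $i\ne j$, and $c_i=r_{j,i}+r_{\ell,i}$, which is the first assertion.

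Next I would compute the Betti elements. Since the binomials above form a minimal presentation, the set of Betti elements of $H$ is $\{b_1,b_2,b_3\}$ with $b_i=c_ih_i$; the minimality of $c_i$ (no multiple $kh_i$ with $0<k<c_i$ lies in $\langle h_j,h_\ell\rangle$) together with pairwise coprimality forces $\mathsf Z(b_i)=\{h_i^{c_i},\,h_j^{r_{i,j}}h_\ell^{r_{i,\ell}}\}$, two factorizations with no atom in common. Hence $\mathsf c(b_i)=\mathsf d\big(h_i^{c_i},h_j^{r_{i,j}}h_\ell^{r_{i,\ell}}\big)=\max\{c_i,\,r_{i,j}+r_{i,\ell}\}$. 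Using the well-known description $\mathsf c(H)=\max\{\mathsf c(b):b\text{ a Betti element of }H\}$, we obtain $\mathsf c(H)=\max\{c_1,c_2,c_3,r_{12}+r_{13},r_{21}+r_{23},r_{31}+r_{32}\}$, as claimed.

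Finally, $\mathsf c(H)\le\mathsf t(H)$ holds for every monoid, so it remains to prove $\mathsf t(H)\le\mathsf c(H)$, i.e.\ to bound each local tame degree $\mathsf t(H,h_i)$ by the maximum above. This is the step where I would invoke the cited result of \cite{BL-GA-GE11A} on generic presentations, applied to the (essentially unique) minimal presentation produced in the first step: it shows that $\mathsf t(H,h_i)$ is controlled by the degrees of the relations in which the atom $h_i$ occurs, and these degrees are again among $c_1,c_2,c_3,r_{12}+r_{13},r_{21}+r_{23},r_{31}+r_{32}$. Alternatively one can argue directly: if $h_i\mid v_1\cdots v_m$ with $h_i$ dividing no proper subproduct, then $v_1\cdots v_m$ arises from a factorization containing $h_i^{c_i}$ by a single application of one of the relations $R_k$, and tracking lengths through that one step yields $\max\{\ell,m\}\le\mathsf c(H)$. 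I expect this last point to be the main obstacle: ruling out that some sequence of rewritings is forced to pass through a factorization of length strictly larger than the Betti-element maximum is exactly what requires the genericity/unique-presentedness of the presentation, and not merely the structure theorem.
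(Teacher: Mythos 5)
The paper's own proof is a two-line citation: after observing that pairwise coprimality of $h_1,h_2,h_3$ forces all $r_{i,j}>0$ (so the presentation is \emph{generic} in the sense of \cite{BL-GA-GE11A}), it states the proposition as a special case of \cite[Corollary~5.8]{BL-GA-GE11A}. Your proposal takes a genuinely different, more self-contained route: you unwind the content of that corollary by hand, starting from Herzog's structure theorem for three-generated non--complete-intersection numerical monoids (the $2\times 3$ monomial matrix whose $2\times 2$ minors give the three relations $R_i$), and from this you correctly read off both the identity $c_i=r_{j,i}+r_{\ell,i}$ and the Betti elements $b_i=c_ih_i$ with $\mathsf Z(b_i)=\{h_i^{c_i},\,h_j^{r_{i,j}}h_\ell^{r_{i,\ell}}\}$, so that the catenary-degree formula follows from $\mathsf c(H)=\max_b\mathsf c(b)$ over Betti elements. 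That part of the argument is sound and buys you a proof of the catenary-degree half of the statement without invoking the citation at all. For the tame-degree half you fall back on the same cited result (or sketch a direct rewriting argument), and you are right to flag that as the genuinely delicate step: the ``single application of a relation'' heuristic you offer as an alternative is not obviously correct as stated, and making it rigorous is precisely what the genericity/unique-presentedness machinery of \cite{BL-GA-GE11A} is for. In short, your route is more explicit and illuminating where the paper is terse, at the cost of being longer; the paper simply defers everything to the reference, while you defer only the tame-degree bound.
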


\begin{proof}
Since $h_1, h_2, h_3$ are pairwise coprime, it follows that all $r_{i,j} > 0$. Thus, the assertion is a special case of \cite[Corollary 5.8]{BL-GA-GE11A}.
\end{proof}

Our next goal is to  construct an infinite family of numerical monoids generated by three atoms which satisfy the conditions of Proposition \ref{criterion}. 
To do so we need a technical lemma which is a simple consequence of Dirichlet's Prime Number Theorem (see \cite[Lemma 1]{Cu90a}).

\smallskip
\begin{lemma} \label{hyp}
	Let $\alpha$ and $\epsilon$ be positive real numbers, let $p  \in \N$ be an odd prime, and let $i,j \in \N_0$ be  coprime with $p$. Then there exist $x,y \in \N_0$ such that $x$ is prime, $x \equiv i \mod{p}$, $y \equiv j \mod{p}$, $\gcd(x,y)=1$ and $|\alpha-\frac{y}{x}| < \epsilon$.
\end{lemma}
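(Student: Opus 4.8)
The plan is to apply Dirichlet's theorem to produce a large prime $x$ in the prescribed residue class $i \bmod p$, and then to choose $y$ among the many integers lying in the target interval $I = \bigl((\alpha-\epsilon)x,\,(\alpha+\epsilon)x\bigr)$ that are congruent to $j$ modulo $p$, taking care to select one that is not a multiple of $x$, so that the coprimality $\gcd(x,y)=1$ holds automatically (as $x$ is prime).

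First I would reduce to the case $\epsilon < \min\{1,\alpha\}$: proving the statement for a smaller value of $\epsilon$ is stronger, so we may shrink $\epsilon$ freely; the inequality $\epsilon<\alpha$ will ensure that $I$ consists of positive reals (hence any integer $y\in I$ lies in $\N_0$), and $\epsilon<1$ will be used below. Since $\gcd(i,p)=1$ and $p$ is prime, the residue class of $i$ modulo $p$ is nonzero, so by Dirichlet's theorem on primes in arithmetic progressions there are infinitely many primes $x$ with $x\equiv i \bmod p$; in particular $p\nmid x$, hence $\gcd(x,p)=1$. Fix such a prime $x$, to be taken sufficiently large (how large is specified below). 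The integers $y$ with $y\equiv j \bmod p$ lying in $I$ form an arithmetic progression with common difference $p$, so their number is at least $\tfrac{|I|}{p}-1 = \tfrac{2\epsilon x}{p}-1$, which tends to infinity with $x$. On the other hand, the integers that are simultaneously multiples of $x$ and congruent to $j$ modulo $p$ form, by the Chinese Remainder Theorem, an arithmetic progression with common difference $\lcm(x,p)=xp$; since $p$ is an odd prime we have $p\ge 3 > 2 > 2\epsilon$, so $xp > 2\epsilon x = |I|$, and therefore $I$ contains at most one such integer. Consequently, choosing $x$ large enough that $\tfrac{2\epsilon x}{p}-1 > 1$, there exists an integer $y\in I$ with $y\equiv j \bmod p$ and $x\nmid y$; as $x$ is prime this gives $\gcd(x,y)=1$. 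By construction $y\in\N_0$, $y\equiv j\bmod p$, $x$ is prime with $x\equiv i\bmod p$, and $\bigl|\alpha-\tfrac{y}{x}\bigr|<\epsilon$, so $x$ and $y$ have all the required properties.

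The only delicate point is securing the coprimality condition: after the prime $x$ has been pinned down by Dirichlet's theorem, one must still have enough freedom in the choice of $y$, and the counting argument above is precisely what provides it, by comparing the number of admissible residues of $y$ modulo $p$ inside $I$ (which grows linearly in $x$) with the number of multiples of $x$ inside $I$ in that residue class (which is at most one). Everything else is a routine application of Dirichlet's theorem together with the normalization of $\epsilon$.
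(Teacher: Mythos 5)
Your proof is correct. Note that the paper itself does not supply a proof of this lemma — it simply refers to Lemma~1 of Curtis~\cite{Cu90a} — so there is nothing in the paper to compare line by line. Your argument (Dirichlet's theorem to fix a large prime $x\equiv i \bmod p$, then a counting/CRT argument to find $y\equiv j\bmod p$ in the interval $\bigl((\alpha-\epsilon)x,(\alpha+\epsilon)x\bigr)$ avoiding the at most one multiple of $x$ in the correct residue class) is sound: the key normalization $\epsilon<\min\{1,\alpha\}$ makes the interval positive and of length $2\epsilon x < px$, the lower bound $\tfrac{2\epsilon x}{p}-1$ on admissible $y$ is valid, and $\gcd(x,p)=1$ holds because $x$ is prime with $x\equiv i\not\equiv 0\bmod p$, justifying the use of CRT. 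Taking $x>p/\epsilon$ then leaves at least two candidates for $y$, of which at most one is divisible by $x$, so the coprimality condition can be met. This is the standard route and very likely matches Curtis's original argument in spirit.
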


The three integers $p,x,y$ described in Lemma \ref{hyp} are pairwise coprime, and the numerical monoid $H= \langle p,x,y \rangle$ satisfies the hypotheses of Proposition \ref{cat3}. Thus, we can compute $\mathsf c(H)$ and $\mathsf t(H)$.

\smallskip
\begin{proposition}\label{comp3}
Let $H$ be the numerical monoid with $\mathcal A (H) = \{h_1, h_2, h_3\}$, where 
$h_1,h_2$ are odd prime numbers, $h_3$ is coprime with $h_1$ and $h_2$, and $2 < h_1 < h_2 < h_3$. Let $k \in \N$ with $2 \le k \le \frac{h_1-1}{2}$ and suppose that 
\[
h_1-k < \frac{h_3}{h_2} < h_1-k+1, \quad   h_2 \equiv 1 \mod h_1, \quad \text{and} \quad h_3 \equiv h_1-k+1 \mod p \,.
\]
Then, for $i \in [1,3]$,  there are non-constant distinct linear forms $\lambda_{k,i}: \mathbb{C}^2 \rightarrow \mathbb{C}$ (with coefficients depending on $h_1$ and $k$), such that  $$\mathsf c(H) = \mathsf t(H) = \max\{\lambda_{k,i}(h_2,h_3) \ | \ i \in [1,3] \}.$$
\end{proposition}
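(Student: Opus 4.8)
The plan is to derive everything from Proposition~\ref{cat3}. First I would observe that $h_1,h_2,h_3$ are pairwise coprime (they are two distinct odd primes together with an integer coprime to both), so Proposition~\ref{cat3} applies and yields
\[
\mathsf c(H)=\mathsf t(H)=\max\{c_1,c_2,c_3,\,r_{12}+r_{13},\,r_{21}+r_{23},\,r_{31}+r_{32}\},
\]
with all $r_{i,j}>0$. Hence the whole problem reduces to computing the six numbers $c_1,c_2,c_3$ and the $r_{i,j}$ explicitly in terms of $h_1,h_2,h_3,k$ under the three standing hypotheses, and then separating those that are non-constant (affine-)linear in $(h_2,h_3)$ from those that are dominated.

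For the computation I would use the Ap\'ery description of $\langle h_1,h_2\rangle$ and $\langle h_1,h_3\rangle$ modulo $h_1$: since $h_2\equiv 1\pmod{h_1}$, an integer $n$ lies in $\langle h_1,h_2\rangle$ precisely when $n\ge b\,h_2$, where $b\in[0,h_1-1]$ is the residue of $n$ modulo $h_1$, and analogously for $\langle h_1,h_3\rangle$ using $h_3\equiv h_1-k+1\pmod{h_1}$. Feeding in the ratio bound $h_1-k<h_3/h_2<h_1-k+1$, I expect the following. For the multiples of $h_3$: $m=1$ fails (its residue is $h_1-k+1$ and $h_3<(h_1-k+1)h_2$) while $m=2$ succeeds (its residue is $h_1+2-2k$ and $2h_3>(2h_1-2k)h_2\ge(h_1+2-2k)h_2$), so $c_3=2$ with $r_{3,2}=h_1+2-2k$ and $r_{3,1}=\bigl(2h_3-(h_1+2-2k)h_2\bigr)/h_1$. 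For the multiples of $h_2$: the least admissible $m$ is $h_1+1-k$ (the relevant residue modulo $h_1$ is then $1$ and $(h_1+1-k)h_2>h_3$, whereas a smaller $m$ would force that residue to be $\ge 2$, hence $m>2(h_1-k)\ge h_1+1$, a contradiction), so $c_2=h_1+1-k$ with $r_{2,3}=1$ and $r_{2,1}=\bigl((h_1+1-k)h_2-h_3\bigr)/h_1$. The identity $c_1=r_{2,1}+r_{3,1}$ from Proposition~\ref{cat3} then gives $c_1=\bigl((k-1)h_2+h_3\bigr)/h_1$, and the relation $c_1h_1=r_{1,2}h_2+r_{1,3}h_3$ gives $r_{1,2}=k-1$, $r_{1,3}=1$; one should cross-check these against $c_2=r_{1,2}+r_{3,2}$ and $c_3=r_{1,3}+r_{2,3}$. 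The constraint $2\le k\le\frac{h_1-1}{2}$ (which forces $h_1\ge 5$) is exactly what keeps all these residues in the stated ranges.

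With the formulas in hand the conclusion is short. The quantities $c_2=h_1+1-k$, $c_3=2$ and $r_{12}+r_{13}=k$ are constants (functions of $h_1,k$ only), while $c_1$, $r_{21}+r_{23}=r_{2,1}+1$ and $r_{31}+r_{32}=r_{3,1}+(h_1+2-2k)$ are non-constant linear forms in $(h_2,h_3)$ with coefficients depending only on $h_1$ and $k$. From $h_3>(h_1-k)h_2$, $h_2>h_1\ge 5$ and $k\le\frac{h_1-1}{2}$ one gets $c_1>(1-1/h_1)h_2>h_1-1$, which exceeds each of the constants $c_2$, $c_3$, $k$; hence the maximum of the six numbers equals $\max\{c_1,\,r_{21}+r_{23},\,r_{31}+r_{32}\}$. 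I would therefore set $\lambda_{k,1},\lambda_{k,2},\lambda_{k,3}$ (in the variables $X_2,X_3$) equal to these three forms: they are non-constant, and pairwise distinct since their $X_3$-coefficients are $1/h_1$, $-1/h_1$ and $2/h_1$.

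The main obstacle is the middle step: checking carefully, from the three hypotheses alone, that the minimal multiples really are $c_3=2$ and $c_2=h_1+1-k$ (and nothing smaller), that the representations $c_ih_i=r_{i,j}h_j+r_{i,\ell}h_\ell$ obtained this way are precisely the ones entering Proposition~\ref{cat3}, and that all residues and the chain of inequalities line up. Positivity of the $r_{i,j}$ needs no separate argument, as it is guaranteed by pairwise coprimality via Proposition~\ref{cat3}. This is a concrete but slightly delicate piece of modular bookkeeping; everything after it---the constant-versus-linear dichotomy, the domination of the constants by $c_1$, and the distinctness of the $\lambda_{k,i}$---is immediate from the explicit formulas.
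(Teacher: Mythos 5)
Your proof is correct and takes essentially the same route as the paper: apply Proposition~\ref{cat3}, compute the $c_i$ and $r_{ij}$ explicitly by working modulo $h_1$, observe that $c_2=h_1-k+1$, $c_3=2$, and $r_{12}+r_{13}=k$ are constants dominated by $c_1$, and keep the three remaining non-constant linear forms. The only visible difference is in the domination step---you argue directly that $c_1>(1-1/h_1)h_2>h_1-1\ge h_1-k+1$, while the paper uses $\lambda_1+\lambda_5=h_2$ to get $\max\{\lambda_1,\lambda_5\}\ge h_2/2>h_1$---but both are short and yield the same conclusion; you also spell out the Ap\'ery-set bookkeeping and the distinctness of the $X_3$-coefficients, which the paper leaves implicit.
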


\begin{proof}
We compute the values of $c_i$ and $r_{ij},r_{il}$.
Since $h_1-k+1 \ge \frac{h_1-1}{2}+1$, a quick computation modulo $h_1$ returns $c_3=2$ and $r_{32}=h_1-2k+2$; since $r_{i,j} > 0$ for every $i,j$ and $c_3=r_{13}+r_{23}$, we have $r_{13}=r_{23}=1$. Then $c_2h_2 = h_3 + r_{21}h_1$, and again working modulo $h_1$ we obtain $c_2=h_1-k+1$. Since $c_2=r_{12}+r_{32}$ we further deduce $r_{12}=k-1$. Finally, by substituting these results in the equations $c_ih_i=r_{ij}h_j+r_{il}h_l$ we can deduce $c_1=\frac{(k-1)h_2+h_3}{h_1}$, $r_{21}=h_2-\frac{(k-1)h_2+h_3}{h_1}$ and $r_{31}=\frac{2h_3-(h_1-2k+2)h_2}{h_1}$. Therefore in the formula of Proposition \ref{cat3} we obtain 
\begin{eqnarray*}
	\lambda_1(h_2,h_3) & := &  c_1 =  \frac{(k-1)h_2+h_3}{h_1} \\
	\lambda_2(h_2,h_3) & := & c_2  =  h_1-k+1 \\
	\lambda_3(h_2,h_3) & := & c_3 = 2 \\
	\lambda_4(h_2,h_3) & := & r_{12}+r_{13}  =  k \\
	\lambda_5(h_2,h_3) & := & r_{21}+r_{23} = h_2-\frac{(k-1)h_2+h_3}{h_1}+1 \\
	\lambda_6(h_2,h_3) & := & r_{31}+r_{32} = \frac{2h_3-(h_1-2k+2)h_2}{h_1}+(h_1-2k+2) 
\end{eqnarray*}
Since $k \le \frac{h_1-1}{2}$ we have $2 \le k < h_1-k+1$. Moreover, since $h_1,h_2$ are odd primes and $h_2 \equiv 1 \mod h_1$ we have $h_2 \ge 2h_1+1$. Then, since $\lambda_5(h_2,h_3)+\lambda_1(h_2,h_3)=h_2$, we have $\max \{\lambda_5(h_2,h_3),\lambda_1(h_2,h_3)\} \ge \frac{\lambda_5(h_2,h_3)+\lambda_1(h_2,h_3)}{2} = \frac{h_2}{2} > h_1 > h_1-k+1$.
Thus, from the formula of Proposition \ref{cat3} we obtain 
$$\mathsf c(H) = \mathsf t(H) = \max\{\lambda_1(h_2,h_3),\lambda_5(h_2,h_3),\lambda_6(h_2,h_3)\},$$
proving the thesis.
\end{proof}

We are ready to prove our main result for the class of numerical monoids having precisely three atoms.

\begin{proposition} \label{noformulanum}
There is no  polynomial formula neither for the catenary degree nor for the tame degree, valid  for all numerical monoids having precisely three atoms.
\end{proposition}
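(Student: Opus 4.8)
The strategy is to combine the three preceding results: Lemma \ref{hyp} produces the numerical monoids required by Proposition \ref{criterion}, Proposition \ref{comp3} computes their catenary (= tame) degree as the maximum of a handful of linear forms, and Proposition \ref{criterion} then rules out an implicit formula. Since a polynomial formula yields an implicit one, it suffices to exhibit, for the invariant $\mathsf a(h_1,h_2,h_3) := \mathsf c(\langle h_1,h_2,h_3\rangle) = \mathsf t(\langle h_1,h_2,h_3\rangle)$, a subfamily of numerical monoids with precisely three atoms verifying the hypothesis of Proposition \ref{criterion}; the assertion for the full class of such monoids follows a fortiori, since each monoid in the subfamily is itself a numerical monoid with exactly three atoms.

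The plan is to take $\mathcal P$ to be the set of odd primes exceeding a suitable absolute bound and $\beta = 1/3$. For $h_1 \in \mathcal P$ and $k$ in the corresponding range (after a harmless shift of the index so that $k \le \beta h_1$ sits inside the range $2 \le k \le (h_1-1)/2$ of Proposition \ref{comp3}; the two exceptional values $k \in \{0,1\}$, if one insists on them, admit an ad hoc treatment), I would let $A_k$ be any infinite set of positive irrationals inside the \emph{open} interval $(h_1-k,\,h_1-k+1)$. Given $\alpha \in A_k$ and $\epsilon>0$, apply Lemma \ref{hyp} with the prime $p=h_1$ and residues $i=1$, $j=h_1-k+1$ (coprime to $h_1$ since $k-1 \in (0,h_1)$) to obtain a prime $h_2 \equiv 1 \pmod{h_1}$ and an integer $h_3 \equiv h_1-k+1 \pmod{h_1}$ with $\gcd(h_2,h_3)=1$ and $|\alpha-h_3/h_2|<\epsilon'$, where $\epsilon'\le\epsilon$ is chosen small enough to force $h_3/h_2 \in (h_1-k,h_1-k+1)$ and, crucially, $h_2$ as large as we wish (only finitely many rationals of bounded denominator lie within a fixed distance of the irrational $\alpha$ inside a bounded interval). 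Then $h_1,h_2,h_3$ are pairwise coprime with $2<h_1<h_2<h_3$, so $H=\langle h_1,h_2,h_3\rangle$ is a numerical monoid with exactly three atoms to which Proposition \ref{comp3} applies, giving $\mathsf a(h_1,h_2,h_3)=\max\{\lambda_{k,1}(h_2,h_3),\lambda_{k,5}(h_2,h_3),\lambda_{k,6}(h_2,h_3)\}$.

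The point that needs care is that, in this regime, the maximum is always the single linear form $\lambda_{k,1}(X_2,X_3)=\frac{k-1}{h_1}X_2+\frac{1}{h_1}X_3$. Since $\lambda_{k,1}+\lambda_{k,5}$ equals $h_2$ up to an additive constant, $\lambda_{k,1}\ge\lambda_{k,5}$ reduces to $\lambda_{k,1}\gtrsim h_2/2$, which follows from $h_3/h_2>h_1-k$ once $h_1\ge 3$; and $\lambda_{k,1}\ge\lambda_{k,6}$ reduces, by a direct computation, to $h_3\le(h_1-k+1)h_2-h_1(h_1-2k+2)$, i.e.\ to $h_3/h_2$ being bounded away from the right endpoint $h_1-k+1$ by slightly more than $h_1(h_1-2k+2)/h_2$ — which is exactly what the choice of $\alpha$ in the open interval together with $h_2$ large guarantees. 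Hence $\mathsf a(h_1,h_2,h_3)=\lambda_k(h_2,h_3)$ with $\lambda_k:=\lambda_{k,1}$, a non-constant linear form depending only on $h_1$ and $k$, and $\lambda_k\ne\lambda_{k'}$ for $k\ne k'$ because their $X_2$-coefficients differ. This verifies the hypothesis of Proposition \ref{criterion}, which therefore yields the claim.

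The main obstacle is precisely this last step: identifying, uniformly over an infinite family, a single one of the (up to six) linear forms of Proposition \ref{cat3} that computes the catenary degree. Forcing one form to dominate is what dictates the congruence conditions on the generators and the combined requirement ``$h_2$ large, $h_3/h_2$ strictly inside a prescribed unit interval''; the latter is the reason the approximation parameter $\epsilon$ appears at all in Lemma \ref{hyp} and in Proposition \ref{criterion} — it lets the denominator $h_2$ grow without bound while $h_3/h_2$ stays trapped near a fixed irrational. Everything else (pairwise coprimality of $h_1,h_2,h_3$, nonemptiness of the index range and its shift, irreducibility and pairwise non-proportionality of the polynomials $\lambda_k-Y$) is routine bookkeeping.
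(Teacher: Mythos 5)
Your proposal is correct and follows the same strategy as the paper: odd primes for $\mathcal P$, the residue classes $h_2 \equiv 1$, $h_3 \equiv h_1-k+1 \pmod{h_1}$, the intervals $(h_1-k, h_1-k+1)$ for $A_k$, and the chain Lemma~\ref{hyp} $\Rightarrow$ Proposition~\ref{comp3} $\Rightarrow$ Proposition~\ref{criterion}. Where you differ is in being more explicit at one step: the paper passes from $\mathsf a(H) = \max\{\lambda_{k,1},\lambda_{k,5},\lambda_{k,6}\}$ to the hypothesis of Proposition~\ref{criterion} (which wants a \emph{single} polynomial $\lambda_k$ per $k$) by a somewhat terse remark about subsets $U_1, U_2, U_3$, whereas you pin down that $\lambda_{k,1}$ alone is the maximum whenever $h_3/h_2$ lies in the open interval and $h_2$ is large (forced by shrinking $\epsilon$, since $\alpha$ is irrational), noting $\lambda_{k,1}\ge\lambda_{k,5}$ holds unconditionally in this range and $\lambda_{k,1}\ge\lambda_{k,6}$ once $h_2$ exceeds a bound depending on $\alpha$. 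That refinement is correct and makes the verification of Proposition~\ref{criterion}'s hypothesis, including the distinctness of the $\lambda_k$ via their $X_2$-coefficients $(k-1)/h_1$, entirely transparent.
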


\begin{proof}
Let $\mathcal P$ denote the set of odd prime numbers. For any numerical monoid $H$, let 
 $\mathsf a (H)$ be either the catenary or the tame degree.

Let $h_1 \in \mathcal P$, $k \in \N$ with  $2 \le k \le \frac{h_1-1}{2}$, and let  $A_k= \{ x \in \R \mid h_1-k < x < h_1-k+1\}$. By Lemma \ref{hyp}, for every $\alpha \in A_k$ and every $\epsilon > 0$ there exist $h_2,h_3 \in \N_0$ such that $h_1,h_2,h_3$ are pairwise coprime, 
\[
|\alpha-\frac{h_3}{h_2}| < \epsilon,  \quad h_2 \equiv 1 \mod{h_1}, \quad \text{ and } \quad h_3 \equiv h_1-k+1 \mod{h_1} \,.
\] 
Then, by Proposition \ref{comp3}, there are three distinct, non-constant linear forms $\lambda_{k,1}, \lambda_{k,2}, \lambda_{k,3}$ (with coefficients in $\mathbb{C}$ depending on $h_1$ and $k$) such that $\mathsf a(H) = \max\{\lambda_{k,i}(h_2,h_3) \ | \ i \in [1,3] \}$. 

Thus there are three distinct subsets $U_1,U_2,U_3$ of the set 
\[
U = \{(h_1,h_2,h_3) \ | \ 2 < h_1 < h_2 < h_3 \text{ are pairwise coprime positive integers}\}
\]
such that in $U_i$ we have $\mathsf a(H)=\lambda_{k,i}(h_2,h_3)$. Since all the $\lambda_{k,i}(h_2,h_3)$ satisfy the hypothesis of Proposition \ref{criterion} and coincide with $\mathsf a(H)$ on $U_i$, it follows by Proposition \ref{criterion} that there is no implicit polynomial formula and hence no polynomial formula for $\mathsf a(H)$.
\end{proof}

\smallskip
\begin{proof}[Proof of Theorem \ref{main}]
Let $\mathcal H$ be a class of finitely generated reduced monoids as given in Theorem \ref{main} and suppose that $\mathcal{H}$ contains all numerical monoids  generated by three atoms,
By Proposition \ref{noformulanum}, there is no polynomial formula, neither for the catenary nor for the tame degree, which is valid for all numerical monoids generated by three atoms, whence there is no such formula for the larger class of monoids $\mathcal H$.
\end{proof}

We end with two remarks highlighting that the statement of Theorem \ref{main} does not hold true for other arithmetic invariants and that there are formulas, say for the catenary degree, in algebraic terms other than the atoms.

\smallskip
\begin{remark} \label{elasticity}
We showed that there is no polynomial formula for the catenary degree or the tame degree (valid for all numerical monoids with three generators) by showing that there is no implicit polynomial formula. In contrast to this, there is an implicit polynomial formula for a different invariant, namely for the elasticity, which is valid for a class of monoids which includes all numerical monoids. 

For an atomic monoid $H$,
\[
\rho (H) = \sup \{ \ell /k \mid \ \text{there are atoms $u_1, \ldots, u_k, v_1, \ldots, v_{\ell}$ with $u_1 \cdot \ldots \cdot u_k = v_1 \cdot \ldots \cdot u_{\ell}$} \} \in \R_{\ge 1} \cup \{\infty\}
\]
denotes  the {\it elasticity} of $H$, which is one of the best investigated invariants in factorization theory (see \cite{Ge-HK06a} for some background). A monoid $H$ is {\it finitely primary} (of rank $s \in \N$ and of exponent $\alpha \in \N$) if it is a submonoid of a factorial monoid $F = F^{\times} \times \mathcal F (\{p_1, \ldots, p_s\})$ such that
\[
H \setminus H^{\times} \subseteq p_1 \cdot \ldots \cdot p_sF \quad \text{and} \quad (p_1 \cdot \ldots \cdot p_s)^{\alpha}F \subseteq H \,.
\]
Then $\rho (H) < \infty$ if and only if $s=1$, and $H$ is finitely generated if and only if $s=1$ and $F^{\times}/H^{\times}$ is finite. If $D$ is a one-dimensional local Noetherian domain whose integral closure is a finitely generated $D$-module, then the multiplicative monoid $D \setminus \{0\}$ is finitely primary. Note that all numerical monoids are finitely primary of rank one. 

Suppose that $H$ is a reduced, finitely primary, and  finitely generated monoid, and (with the notation of above)  let $\mathcal A (H) = \{\varepsilon_1 p_1^{n_1}, \ldots, \varepsilon_s p_1^{n_s} \}$, where $\varepsilon_1, \ldots, \varepsilon_s \in F^{\times}$ and $1 \le n_1 < \ldots < n_s$. Then $\rho (H) = n_s/n_1$ by 
 \cite[Lemma 4.1]{Ge-Zh18a}.  Therefore, there is no polynomial formula for the elasticity (valid for all reduced, finitely primary, and finitely generated monoids), but there is an implicit polynomial formula for $\rho(H)$, given by the polynomial $G=(X_s-X_1Y)$.  
\end{remark}

\smallskip
\begin{remark} \label{tiles}
By Theorem \ref{main},  there is no polynomial formula for the catenary degree in terms of the atoms, valid for a given class of monoids $\mathcal H$. However, there are formulas for the catenary degree valid for suitable large classes of monoids, which are given by algebraic parameters other than the atoms. We provide some examples.

1. There is a formula for the catenary degree, valid for all numerical monoids with three atoms, in graph theoretical language (\cite[Proposition 5]{AG-GS10}).

2. If a monoid is given in terms of generators and relations, then there is a formula in terms of relations (see \cite[Corollary 9]{Ph10a}, \cite{Ph15a}). Proposition \ref{cat3} goes back to a formula of this type.

3. Let $G$ be a  finite abelian group and, to exclude trivial cases, suppose that $|G| \ge 3$. Then the monoid $\mathcal B (G)$ of zero-sum sequences over $G$ is an affine monoid (indeed, it is a normal affine monoid, in other words, a Krull monoid). If $H$ is a Krull monoid with class group $G$ and every class contains a prime divisor, then the catenary degrees of $H$ and of $\mathcal B (G)$ coincide. There are formulas for the catenary degree in terms of the group invariants, valid for several classes of finite abelian groups (e.g., \cite{Ge-Zh15b};  for a survey and  background on catenary degrees in Krull monoids \cite{Ge-HK06a, Sc16a, Gr22a}).

4. For a sample of results on catenary degrees in various ring theoretic settings, see \cite{Br-Ge-Re20, Do20a,MR4613278}.
\end{remark} 


\providecommand{\bysame}{\leavevmode\hbox to3em{\hrulefill}\thinspace}
\providecommand{\MR}{\relax\ifhmode\unskip\space\fi MR }
\providecommand{\MRhref}[2]{%
  \href{http://www.ams.org/mathscinet-getitem?mr=#1}{#2}
}
\providecommand{\href}[2]{#2}

\end{document}